\theoremstyle{plain}
\newtheorem{prop}{Proposition}[section]
\title{Dynamic change-point detection \\using similarity networks}
\author{\IEEEauthorblockN{Shanshan Cao}
\IEEEauthorblockA{Georgia Tech\\School of Industrial and Systems Engineering\\
Email: css1@gatech.edu
} \and
 \IEEEauthorblockN{Yao
Xie}
\IEEEauthorblockA{Georgia Tech\\
School of Industrial and Systems Engineering\\
Email: yao.xie@isye.gatech.edu}
%\and
% \IEEEauthorblockN{Yuxin
%Chen}
%\IEEEauthorblockA{Stanford University\\
%Email: yxchen@stanford.edu}
%
}
\begin{document}
\maketitle

\begin{abstract}
From a sequence of similarity networks, with edges representing certain similarity measures between nodes, we are interested in detecting a change-point which changes the statistical property of the networks. After the change, a subset of anomalous nodes which compares dissimilarly with the normal nodes. We study a simple sequential change detection procedure based on node-wise average similarity measures, and study its theoretical property. Simulation and real-data examples demonstrate such a simply stopping procedure has reasonably good performance. We further discuss the faulty sensor isolation  (estimating anomalous nodes) using community detection.
\end{abstract}

\section{Introduction}

Sensors are widely used to measure physical quantities, such as temperature, pressure, seismic waves, etc. A critical issue is to monitor the status of sensors in real time for any anomalous or malfunctioning ones and detect them as quickly as possible. In various scenarios, it may be difficult to identify broken sensor is broken by directly examining the observations from individual sensors, for instance, the random observations from individual sensor are dynamic, non-stationary, and constantly changing mean and variance. However, the pairwise comparison between nodes may be stationary and used for change-point detection. In particular, one may exploit the fact that the anomalous sensors generate observations that are dissimilar to normal sensors to identify anomalous ones effectively.

A real-data example is illustrated in Fig. \ref{figure:ave_graph}. The data corresponds to hourly temperature measurements at 42 transformers in August 2015, at a converting station in Shandong Province, China. Note that the temperature measurements at each sensor have different means and the means are changing dynamically over time. There are six anomalous sensors (shown in the right panel of the figure), whose observations compare very differently from the observations of the normal sensors (shown in the left panel of the figure) at each moment.

\begin{figure}[!htbp]\label{obs_plot}
  \centering
    \includegraphics[width = .8\linewidth]{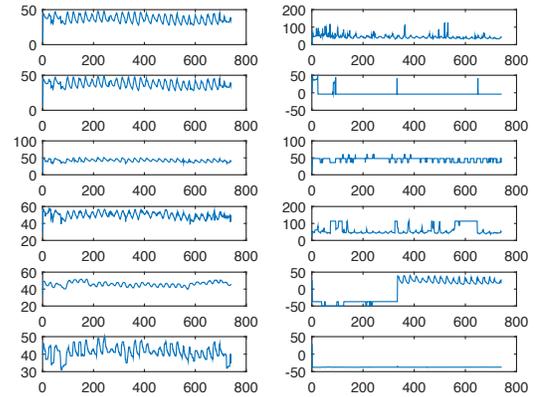}
     \caption{Real data: Hourly temperature measured at 42 transformers in August 2015, at a converting station in Shandong Province, China. The right panel shows the observations from the abnormal sensors while the left panels show the observations from the normal sensors. The instantaneous observations for normal sensors compare similarly to each other; whereas the abnormal sensors compare dissimilarly to the normal sensors.}
     \label{figure:ave_graph}
\end{figure}

In this paper, we are aiming to detect a change from high-dimensional streaming data, by performing the pairwise comparison between sensors at each time. This is equivalent to detecting a change from a sequence of the so-called {\it similarity} networks. Before the change, the similarity between nodes are large, and after the change, a subset of nodes (represent anomalous sensors) have small similarity with the rest of the nodes (represent normal sensors).

We study this problem via a sequential hypothesis test framework and present a simple procedure to detect the change quickly online. The procedure computes the average similarity at each node at each time and detects a change whenever the smallest similarity drops below a certain threshold. We present the general performance bound of this problem and characterize the performance of our simple procedure. We demonstrate the good performance of our procedure using simulations and real-data examples. Finally, we also present a faulty sensor localization method by casting the problem as {\it community detection}.

Close related works include multi-channel and multi-sensor change-point detection
\cite{tartakovsky2008asymptotically,mei2010efficient,xie2013sequential}, which detects the change by constructing statistic at each sensor separately, and \cite{chen2015spectral}, which studies the fundamental information-theoretic limits of recovering variables from their pairwise comparison (pairwise difference, in particular).

%which mainly derived statistics based on Generalized Likelihood Ratio (GLR) to detect the change in multi-sensor problem where they assumed the observations are i.i.d. from each sensor with different distributions before and after the change point for abnormal sensors. \cite{mei2010efficient} assumed i.i.d for each data stream while the distribution for different streams might differ. \cite{tartakovsky2008asymptotically} considers change point detection in a distributed setting, while data \cite{chen2015spectral} focus on recover the variables based on pairwise comparison. On the other hand, we want to detect whether there is change or not sequentially as well as find the abnormal sensors from observations from some random processes by comparing the correlation between each pair of sensors.

\section{Similarity networks}\label{sec:formulation}

Suppose there are $N$ sensors. Sensor $i$ generates a sequence of observations $x_{i,t}$, $t = 1, 2, \ldots$, $i = 1, 2, \ldots, N$. Consider observations in a sliding window of length $w$ at each time $t$:
\[
X_{i,t} = \begin{bmatrix}
x_{i, t-w+1} & \cdots & x_{i, t}
\end{bmatrix}^\intercal \in \mathbb{R}^w.
\]
Using these observations, we construct pairwise similarity between pairs of nodes at each time, which is defined as
\[y_{ijt} = f(X_{i, t}, X_{j, t}),\] where $f$ is a similarity measure. Here assume we may not necessarily observe the complete graph, i.e., all pairwise similarity measures. Fig. \ref{fig:form} illustrates the setting.

Various similarity measures $f$ have been used in practice including Euclidean distance, empirical entropy, inner product, and the Pearson's and Spearman's correlation coefficient. The Pearson's correlation is used to measure the angle between two standardized vectors (standardize a vector by subtracting the mean and then divide by the standard deviation) and it is typically used to estimate the linear dependence of two random variables.

The choice of window length $w$ should be large enough so that the sample similarity will be precisely estimated and converges to a steady value. On the other hand, $w$ should not be too large which will lead to a large detection delay. Fig. \ref{figure:windowSize} illustrates the idea: we plot sample correlation versus $w$ between sensors for the thermo-sensor data. Note that the values of $y_{ijt}$ converge roughly when $w$ is greater than 200.

\begin{figure}[!htbp]
  \centering
    \includegraphics[width = .6\linewidth]{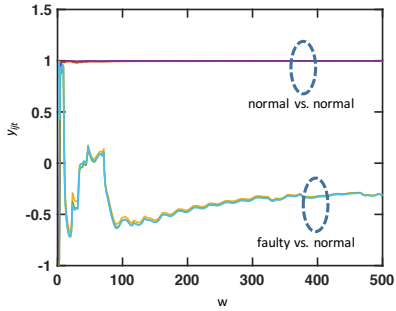}
     \caption{Pairwise Pearson's correlation for the sensor data in Fig. 1, when increasing the window length $w$.}
          \label{figure:windowSize}
\end{figure}

This way we define a sequence of similarity networks; $[y_{ijt}]_{1\leqslant i \leqslant N, 1\leqslant j \leqslant N}$ can be viewed as an adjacency matrix at time $t$. In many applications,  the data streams $x_{i, t}$ themselves are not stationary, but their similarity measures are stationary. Our goal is to detect a change occurs to the similarity networks which changes the distribution of the $y_{ijt}$. We are particularly interested in detecting the emergence of an anomalous community, i.e., after the change, there is a subset of nodes which compare {\it dissimilarly} with the normal nodes.

 \begin{figure}
\begin{center}
\begin{tabular}{cc}
\includegraphics[width = .45\linewidth]{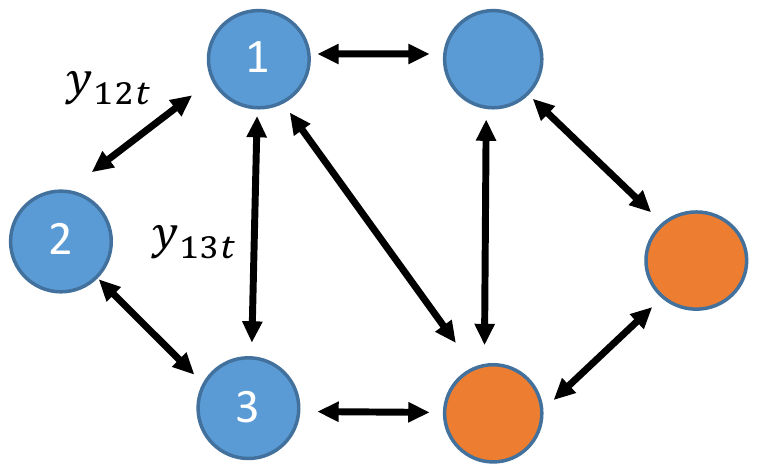} &
 \includegraphics[width = .45\linewidth]{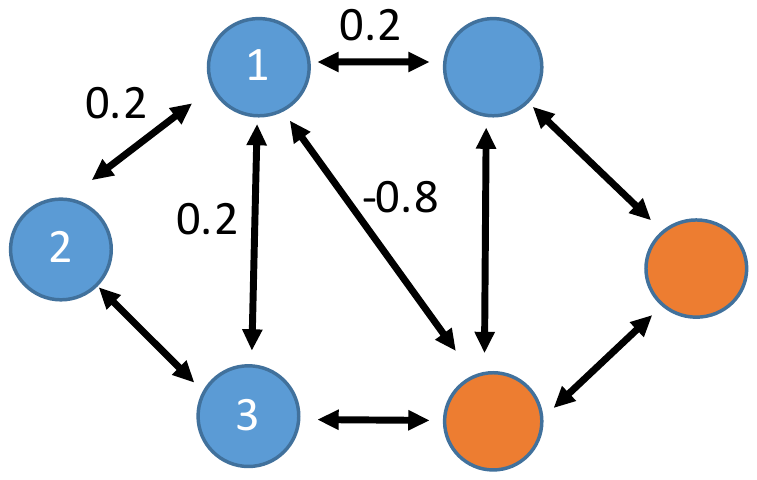}\\
 (a) & (b)
 \end{tabular}
\end{center}
%\vspace{-0.2in}
\caption{(a): Similarity network, with the weights on the edges represent similarity between nodes; red nodes represent anomalous sensors which compares dissimilar to the blue (normal) sensors; %: each node represents a sensor in a network. Each sensor will give observations over time. Each line between a pair of sensors represents a comparison between them at time $t$;
(b): A counter example where node-wise average of correlation coefficient may not lead to correct identification of abnormal nodes.}
\label{fig:form}
\vspace{-0.1in}
\end{figure}

\section{Change-point detection over a sequence of similarity networks}\label{sec:similaritynetwork}

In this section, we present the change-point detection problem as a sequential hypothesis testing problem and present an stopping procedure, which detects the change based on the node-wise average of similarity measures.

Under the null hypothesis,  there is no change, all pairs of the sequences compares similarly to each other on the observations over time. Under the alternative hypothesis, there exists a change-point $\kappa$, $0 \leq \kappa < \infty$ and a subset $\mathcal{S}\subseteq [ 1,N ]$, such that before $\kappa$, the observations from each sensor compares similarly to each other, while after the change, the observations for sensors from $\mathcal{S}$ compares  dissimilarly with normal sensors. Our goal is to detect the change-point as quickly as possible after it occurs and, after we have detected the change,  localize the subset $\mathcal{S}$ of abnormal sensors.
Formally, this can be stated as the following sequential hypothesis test
\begin{equation*}
\begin{array}{ll}
H_0: & y_{ijt} \overset{\text{i.i.d.}}{\sim} \mathbb{P}_0, \quad t = 1, 2, \ldots\\
H_1: &y_{ijt} \overset{\text{i.i.d.}}{\sim} \mathbb{P}_0, \quad t = 1, 2, \ldots, \kappa\\
&   y_{ijt} \overset{\text{i.i.d.}}{\sim} \mathbb{P}_0, \quad i, j \notin \mathcal{S},\\
& y_{ijt} \overset{\text{i.i.d.}}{\sim} \mathbb{P}_1, \quad i \in \mathcal{S}, j \notin \mathcal{S},\\
& ~~~~~~~~~~~~~~~ \mbox{~or~} i \notin \mathcal{S}, j \in \mathcal{S},  \\
& y_{ijt} \overset{\text{i.i.d.}}{\sim} \mathbb{P}_2, \quad i,j \in \mathcal{S}, \quad t = \kappa +1, \ldots, t.
\end{array}
\end{equation*}
Here the unknown $\mathbb{P}_0$ stochastically dominates unknown $\mathbb{P}_1$. The distribution of $y_{ijt}$ for all normal sensors {\it stochastically dominate} \cite{casella2002statistical} those $y_{ijt}$ for $i \in \mathcal{S}$, which captures the idea that the normal sensors compare similar with each other, and the abnormal sensors compare dissimilar with the normal sensors. The unknown $\mathbb{P}_2$, which corresponds to the distribution of comparing faulty sensor with faulty sensors, is not utilized for our test since it usually does not necessarily contain useful information.

To detect emergence of a change, we consider a detection statistic, which is based on node-wide average of similarity measures. For node $i$, let $\mathcal{N}_i$ denote its neighborhood, and define the {\it negative} average similarity over the neighborhood as
\begin{equation}\label{sta_rho}
\rho_{it} =  - \frac{\sum_{j\in \mathcal{N}(i)} y_{ijt}}{|\mathcal{N}(i)|}.
\end{equation}
At each time $t$, we detect a change whenever the maximum of $\rho_{it}$ over all sensors $i = 1, 2, \ldots, N$ exceeds certain threshold. The detection procedure is a stopping time
\begin{equation}\label{stop}
T = \inf\{t:  \max_{i=1}^N  \rho_{it}
%\frac{\sum_{j\in \mathcal{N}(i)} (-y_{ijt})}{|\mathcal{N}(i)|}
> b\},
\end{equation}
where $b$ is a pre-specified threshold to control the false alarm rate.

\section{Theoretical analysis} \label{sec:theo}

\subsection{General bound relating ARL and EDD}

The two standard performance metrics are the Average Run Length (ARL), denoted as $\mathbb{E}^\infty[T]$, which is the expected value of the stopping time when there is no change, and the expected detection delay (EDD), \[\mathbb{E}^1[T] = \sup_{\kappa \geq 1}(\mbox{ess} \sup \mathbb{E}_\kappa [(T-\kappa + 1)^+|\{y_{ij\kappa - 1}\}, \forall i, j ]).\] We have the following general lower bound, which is obtained from the standard result in \cite{lorden71} by calculating the Kullback-Leibler divergence:
\begin{prop}
For $\mathbb{E}^\infty[T]\geq \gamma$, as $\gamma \rightarrow \infty$,
\[\mathbb{E}^1[T] \leq  \frac{\log \gamma}
{\textsf{cut}(\mathcal{S})\cdot \mathsf{KL}\left(\mathbb{P}_1\left\{ y_{ijt}\right\} \text{ }\|\text{ }\mathbb{P}_0\left\{ y_{ijt}\right\} \right)} + O(1),\]
where $\textsf{cut}(\mathcal{S})$ is number of edges cut by separating the true subset $\mathcal{S}$, and $\mathsf{KL}$ denotes the Kullback-Leibler divergence between two distributions.
\end{prop}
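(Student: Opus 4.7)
The statement asserts that the stopping rule $T$ from~(\ref{stop}) achieves expected detection delay at most $\log\gamma/(\textsf{cut}(\mathcal{S})\cdot\mathsf{KL})+O(1)$ once the threshold $b$ is calibrated so that $\mathbb{E}^\infty[T]\geq\gamma$. The plan has two stages: calibrate $b$ against the ARL constraint, then bound the post-change hitting time of $\max_i\rho_{it}$, invoking Lorden's general achievability framework~\cite{lorden71}.

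For threshold calibration, I would analyze $\mathbb{P}_0(\rho_{it}>b)$ via a Chernoff bound: under $H_0$, $\rho_{it}$ is the negated mean of $|\mathcal{N}(i)|$ i.i.d. samples from $\mathbb{P}_0$, so $\mathbb{P}_0(\rho_{it}>b)\leq\exp(-|\mathcal{N}(i)|\,\Lambda^*_0(b))$, with $\Lambda^*_0$ the Fenchel--Legendre transform of the relevant log moment generating function. Union bounding over $i\in[N]$ and combining with a geometric tail argument (valid because the per-time statistics at times separated by more than the window length $w$ are independent) yields $\mathbb{E}^\infty[T]\gtrsim\exp(c\cdot b)$ for a graph-dependent constant $c$. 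Setting $b=b(\gamma)$ to match $\gamma$ then fixes $b=\Theta(\log\gamma)$.

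For the post-change side, the structural fact I would exploit is that the Kullback--Leibler divergence of the joint observation vector $\{y_{ijt}\}$ at a single post-change time step equals $\textsf{cut}(\mathcal{S})\cdot\mathsf{KL}(\mathbb{P}_1\|\mathbb{P}_0)$, since only the $\textsf{cut}(\mathcal{S})$ cross-cut edges switch from $\mathbb{P}_0$ to $\mathbb{P}_1$ and the $\mathbb{P}_2$ contribution is not used by the procedure. Lorden's achievability argument then shows that any rule whose detection statistic is sufficient for this likelihood ratio, calibrated to threshold $\Theta(\log\gamma)$, attains post-change hitting time $\log\gamma/(\textsf{cut}(\mathcal{S})\cdot\mathsf{KL})+O(1)$. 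The remaining step is to verify that $\max_i\rho_{it}$ serves as such a sufficient-statistic proxy, which holds when $(\mathbb{P}_0,\mathbb{P}_1)$ lie in an exponential family whose natural parameter is aligned with the mean shift tracked by $\rho_{it}$.

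The main obstacle is this last identification: $\max_i\rho_{it}$ is a maximum of node-wise averaged similarities rather than the edge-wise log-likelihood-ratio sum used by Lorden's CUSUM; without additional structure the rate may be degraded by a constant factor. I would handle this by either (i) restricting to exponential-family mean-shift models, where the two statistics agree up to an affine transformation, so that $\textsf{cut}(\mathcal{S})\cdot\mathsf{KL}$ matches the mean drift of $\rho_{i^\star t}$ at a node $i^\star\in\mathcal{S}$ whose neighborhood absorbs all cut edges, or (ii) absorbing the discrepancy into the $O(1)$ term through a careful change of variable in the Cram\'er rate function. Cross-time dependence through the sliding window is handled by subsampling $T$ at spacing $w$, which makes the relevant increments independent and lets Wald's identity apply cleanly.
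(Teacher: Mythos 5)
The paper gives no written proof of this proposition: it is asserted as a direct instance of Lorden's classical ARL--EDD bound, and the only content specific to this setting is the computation of the per-time-step information number. Since the edge variables $y_{ijt}$ are independent across edges and only the $\textsf{cut}(\mathcal{S})$ cross-cut edges switch from $\mathbb{P}_0$ to $\mathbb{P}_1$, the Kullback--Leibler divergence of the joint observation at a single post-change time is $\textsf{cut}(\mathcal{S})\cdot \mathsf{KL}(\mathbb{P}_1\|\mathbb{P}_0)$ (the within-$\mathcal{S}$ edges' $\mathbb{P}_2$ contribution being discarded). You identify exactly this additivity, so that piece of your proposal coincides with the paper's intended one-line argument. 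Note, however, that despite the reuse of the symbol $T$, the surrounding text presents this as a ``general lower bound \ldots obtained from the standard result in \cite{lorden71}'' --- i.e., a statement about the best delay attainable by a likelihood-ratio (CUSUM-type) rule built on the full edge set --- and the performance of the specific rule (\ref{stop}) is treated separately in Proposition \ref{zero_thresh}.

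The genuine gap is that your proposal commits to proving the bound for the specific stopping rule (\ref{stop}), and that plan cannot be carried through. First, (\ref{stop}) thresholds an \emph{instantaneous} statistic rather than a cumulative log-likelihood ratio; there is no random walk to which Wald's identity or Lorden's achievability lemma applies, and for such Shewhart-type rules the delay is governed by the per-step detection probability at a threshold calibrated to ARL $\gamma$, which generically does not yield the $\log\gamma / I$ scaling at all. Second, even granting an exponential-family reduction, the statistic $\rho_{it}$ at a node $i\in\mathcal{S}$ aggregates only the $|\mathcal{N}(i)\setminus\mathcal{S}|$ cut edges incident to $i$; when $|\mathcal{S}|>1$ no single neighborhood ``absorbs all cut edges,'' so the best denominator your route (i) could produce is $\max_{i\in\mathcal{S}}|\mathcal{N}(i)\setminus\mathcal{S}|\cdot\mathsf{KL}(\mathbb{P}_1\|\mathbb{P}_0)$, which is strictly smaller than $\textsf{cut}(\mathcal{S})\cdot\mathsf{KL}(\mathbb{P}_1\|\mathbb{P}_0)$. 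Third, this discrepancy multiplies the leading $\log\gamma$ term, so it cannot be ``absorbed into the $O(1)$'' as your route (ii) suggests. The repair is to prove the proposition as the paper intends: apply Lorden's theorem to the CUSUM procedure built on the joint per-step likelihood ratio of all observed edges, whose information number is the additive quantity $\textsf{cut}(\mathcal{S})\cdot\mathsf{KL}(\mathbb{P}_1\|\mathbb{P}_0)$ computed above; your threshold-calibration step for (\ref{stop}), while fine in itself, is not needed for that argument.
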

%This proposition is proved by calculating the K-L divergence between the two hypothesis.

\subsection{Correlation networks and performance measure}

In the following, we characterize the performance of our simple detection procedure based on node-wise average similarity (since we did not assume known $\mathbb{P}_0$ and $\mathbb{P}_1$, otherwise we may be able to use likelihood ratio statistic). We show that when the
Pearson's correlation coefficient is used, then zero is a separating threshold.

The Pearson's correlation score for each pair of sensors at time $t$ is defined as
\[
y_{ijt} = \frac{(X_{i, t} - \bar{X}_{i, t})^\intercal (X_{j, t} - \bar{X}_{j, t})}
{\|X_{j, t} - \bar{X}_{j, t}\|\cdot \|X_{j, t} - \bar{X}_{j, t}\|},\]
where $\bar{X}_{i, t} = \textbf{1}_w (\sum_{\ell = t-w+1}^t x_{i, \ell})$, which is a scaled all-one vector.
Note that in this case, $y_{i,j,t} \in [-1, 1]$. If we define $u_{i, t} = (X_{i, t} - \bar{X}_{i, t})/\|X_{i, t} - \bar{X}_{i, t}\|$, then essentially
$y_{ijt}= u_{i, t}^\intercal u_{j, t}$ is the inner-product of two random unit length vectors formed by observations at node $i$ and node $j$.

When the window-length $w$ is large, we may be able to argue (e.g., using the central limit theorem), that the distribution of $y_{ijt}$ are approximately normal with mean $u_{i, t}^\intercal u_{j, t}$ and certain variance $\sigma^2$. Here $u_{i, t}$ denotes a node variable that represent the mean value of the random inner product vector at time $t$, and they satisfy $\textbf{1}_w^\intercal u_{i, t} = 0$, $\forall i$.

Under the assumption that $y_{ijt}$ are i.i.d for all $i, j$, we will have the following proposition.
\begin{prop}[0 is a separating threshold.] \label{zero_thresh}
Under the null hypothesis, let
\[
\textsf{SNR}_{\rm max} = \max_{i\in [N]} \frac{(\sum_{j\in \mathcal{N}(i)}u_{i, t}^\intercal u_{j, t})^2}{|\mathcal{N}(i)|\sigma^2},
\]
If we set 0 as the threshold in the stopping rule, then at time $t$, the probability of false detection at $t$ is given by \[\mathbb{P}^{\infty}\left\{\max_{i=1}^N  \frac{\sum_{j\in \mathcal{N}(i)} (-y_{ijt})}{|\mathcal{N}(i)|} > 0\right\} \lesssim N e^{-\textsf{SNR}_{\rm max}}.\] Under the alternative hypothesis, i.e., there exists a change, let
\[
\textsf{SNR}_{\rm min} = \max_{i\in \mathcal{S}} \frac{(\sum_{j\in \mathcal{N}(i)}u_{i, t}^\intercal u_{j, t})^2}{|\mathcal{N}(i)|\sigma^2},
\]
assume that $\frac{\sum_{j\in \mathcal{N}(i)}u_{i, t}^\intercal u_{j, t}}{\sqrt{|\mathcal{N}(i)|\sigma^2}} < 0 $ for all $i \in \mathcal{S}$, the probability of detection is given by
\[\mathbb{P}^{\kappa}\left\{\max_{i=1}^N  \frac{\sum_{j\in \mathcal{N}(i)} (-y_{ijt})}{|\mathcal{N}(i)|} > 0\right\} \gtrsim 1-e^{-\textsf{SNR}_{\rm min}}.\]
\end{prop}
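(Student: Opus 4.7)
The plan is to exploit the Gaussian approximation for $y_{ijt}$ stated just before the proposition: for $w$ large, $y_{ijt}$ is approximately $\mathcal{N}(u_{i,t}^\intercal u_{j,t}, \sigma^2)$, and under the i.i.d.\ hypothesis on $y_{ijt}$ the node statistic $\rho_{it} = -|\mathcal{N}(i)|^{-1}\sum_{j\in\mathcal{N}(i)} y_{ijt}$ is itself Gaussian with mean $-\mu_i$, where $\mu_i := |\mathcal{N}(i)|^{-1}\sum_{j\in\mathcal{N}(i)} u_{i,t}^\intercal u_{j,t}$, and variance $\sigma^2/|\mathcal{N}(i)|$. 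Both halves of the proposition then reduce to one-sided Gaussian tail estimates on a family of $N$ Gaussians, combined either with a union bound (for the upper bound under $H_0$) or with a single-index lower bound (for the lower bound under $H_1$).

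For the null, under $H_0$ all nodes compare similarly, so $\mu_i>0$ and $\{\rho_{it}>0\}$ is a genuine tail event. Writing $\mathbb{P}^\infty(\rho_{it} > 0) = \Phi(-\mu_i \sqrt{|\mathcal{N}(i)|}/\sigma)$ and applying the standard Gaussian tail bound $\Phi(-x) \le e^{-x^2/2}$ yields $\mathbb{P}^\infty(\rho_{it}>0) \lesssim \exp(-\textsf{SNR}_i)$, where $\textsf{SNR}_i$ is the per-node quantity defined in the proposition. A union bound over $i\in[N]$ then delivers the stated $N e^{-\textsf{SNR}_{\rm max}}$. For the alternative, in place of the union I would lower-bound the max by a single index: for any $i^*\in\mathcal{S}$, $\mathbb{P}^\kappa(\max_i \rho_{it} > 0) \ge \mathbb{P}^\kappa(\rho_{i^*,t} > 0)$. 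The standing hypothesis $\sum_{j\in\mathcal{N}(i)} u_{i,t}^\intercal u_{j,t} < 0$ for $i\in\mathcal{S}$ places the mean of $\rho_{i^*,t}$ strictly above zero, so the symmetric bound $1-\Phi(-x) \ge 1-e^{-x^2/2}$ gives $\mathbb{P}^\kappa(\rho_{i^*,t}>0) \gtrsim 1 - \exp(-\textsf{SNR}_{i^*})$; optimizing $i^*$ over $\mathcal{S}$ to maximize $\textsf{SNR}_{i^*}$ yields the claimed lower bound.

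The proof is essentially mechanical once the Gaussian reduction is granted, and the principal obstacle is really the i.i.d.\ assumption on $y_{ijt}$ across $j$: pairs $(i,j)$ and $(i,j')$ both re-use the window $X_{i,t}$, so in reality $y_{ijt}$ and $y_{ij't}$ are correlated, and a rigorous bound on $\mathrm{Var}(\rho_{it})$ would need either a negative-association argument or a direct calculation of the covariance of sample Pearson correlations sharing a common endpoint. Within the stated model this dependence is absorbed into the common variance $\sigma^2$, and the remaining work reduces to the Mills' ratio manipulations sketched above.
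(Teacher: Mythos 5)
Your proposal matches the paper's proof almost line for line: the same Gaussian reduction of $\rho_{it}$ to a normal variable with mean $-\mu_i$ and variance $\sigma^2/|\mathcal{N}(i)|$, the same union bound plus Gaussian tail estimate under the null, and the same single-index lower bound $\mathbb{P}^\kappa(\max_i \rho_{it}>0)\ge \mathbb{P}^\kappa(\rho_{i^*,t}>0)$ with the best $i^*\in\mathcal{S}$ under the alternative. (Both arguments share the same small slip --- the union bound actually yields $N e^{-\min_{i}\textsf{SNR}_i}$ rather than $N e^{-\textsf{SNR}_{\rm max}}$, since $1-\Phi(\sqrt{x})$ is decreasing in $x$ --- and your closing caveat about the dependence of $y_{ijt}$ and $y_{ij't}$ sharing the window $X_{i,t}$ is a legitimate gap that the paper likewise leaves unaddressed.)
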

It can be seen from Proposition \ref{zero_thresh} that the false detection and the detection powers are related to the node-wise inner products $u_{i, t}^\intercal u_{j, t}$. The conditions therein can be interpreted as requirements on the average coherence of the frame formed by these unit length vectors $[u_{1, t}, \ldots, u_{N, t}]$ \cite{mixon2011frame}, as illustrated in Fig. \ref{fig:coherence}.

Note that the inverse of the false detection probability under the null distribution can be approximated to be the ARL. Under the alternative, the probability of detection is related to the EDD of the procedure.

 \begin{figure}
\begin{center}
\includegraphics[width = .3\linewidth]{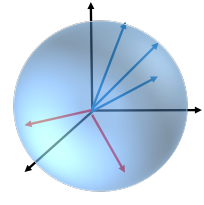}
\end{center}
\caption{Coherence among a group of unit vectors. The blue vectors denote the $u_{i, t}$ at the normal sensors, and the red vectors denote the $u_{i, t}$ associated with the abnormal nodes. The unit-length vectors associated with the normal nodes can be rotating over time, but they are always aligned in a narrow cone; the unit-length length vectors associated with the abnormal nodes are in the ``opposite direction'' from the normal nodes.}
\label{fig:coherence}
\vspace{-0.1in}
\end{figure}

\section{Numerical examples}\label{sec:sim}

In the following, we present numerical and real-data examples to demonstrate the performance of our procedure defined in (\ref{stop}).

\subsection{Simulations}

\subsubsection{Normal observations with linearly increasing or decreasing mean} We first consider a setting where the sensor observations are normal random variables with linearly increasing or decreasing means. Before the change, the trend of the means for all sensors are positive; after the change, a subset of abnormal sensors have the trend become negative, while the normal sensors remain to have a positive trend. Specifically, under the null, we assume sensor observation mean is $t$ and the variance is 25. While under the alternative, 5 of the sensors become abnormal and start to produce observations with a negative trend. To study the performance of our procedure at different SNR levels, we perform experiments with slopes after change being $-.1, -.2, \cdots, -1$.

Under the null, we simulated 5000 observations for each of the 40 sensors. At each time, we computed the pairwise Pearson's correlation using the previous $w =25$ observations. We choose the threshold $b$ by simulation so that ARL is approximately 5000. While under the alternative, for each of the slope, we simulate the first 24 observations under the null and let the change happen at time 25. Fig. \ref{figure:EDD25} shows the detection delays as SNR changes in the alternative, which demonstrates that the procedure can detect change fairly quickly (note that in this case, the noise variance is large so EDD is relatively large).

\begin{figure}[!htbp]
  \centering
    \includegraphics[width = .5\linewidth]{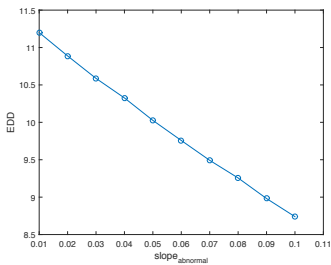}
     \caption{Normal observation with time-varying mean, EDD of our procedure $T$ defined in (\ref{stop}) versus the magnitude of change, which is proportional to the slope of the abnormal nodes.
     }
     \label{figure:EDD25}
\end{figure}

\subsubsection{Normal observations with constant mean and change happens in the covariance}
In this setting, we assume the sensors procedure normal observations with constant mean, however, the change alters the covariance between the sensors. Assume there are $N = 40$ sensors. Under the null, the observations have covariance $\mathbb{E}[x_{i,t} x_{j,t}] \geq 0.5$. Under the alternative, five sensors are anomalous and their correlation with the normal sensors are negative. We simulate 1000 observations. Fig. \ref{figure:compound} demonstrates the histogram of $y_{i, j, t}$, for $i, j = 1, \ldots, 40$, $t = 1, \ldots, 1000$ under the null and the alternative. Note that under the alternative, zero is clearly a separating threshold in this case.

\begin{figure}[!htbp]
  \centering
    \includegraphics[width = .6\linewidth]{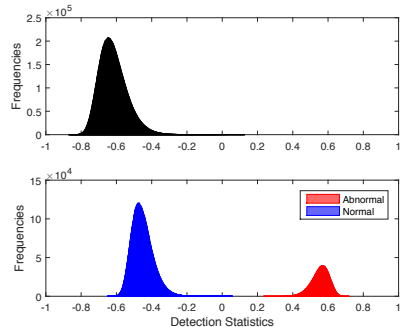}
     \caption{Normal observation with covariance change: histogram of $-y_{ijt}$ under the null and under the alternative. Note that under the alternative, the distribution of normal and abnormal sensor observations split into two groups.
     }
     \label{figure:compound}
\end{figure}

\subsection{Real-data} \label{sec:real}
In this example, we apply our method to the temperature of transformer data illustrated in Fig. \ref{figure:ave_graph}. There are $N = 42$ sensors. Each sensor produced an observation hourly. We have the data for all the August (31 days) with few missing values. We remove the rows with missing values for all the 42 sensors and then we have 739 observations from each sensor. Since they are temperature measurement for transforms in a same local region in a city, the normal sensor readings are expected to have high ``similarity'' at each time. There are six abnormal sensors among the total 42 sensors, and the remaining 36 sensors are normal.  Fig. \ref{figure:heatmap}(a) demonstrates the histogram of $y_{ijt}$ for all sensors across all times, where one can see that the similarities between the normal sensors statistically dominate those between the abnormal and the normal sensors. Therefore, the real data satisfy our assumption and our procedure can be used to detect the change-point. In this example, we find that threshold $b = 0.4721$ can separate the normal and abnormal sensors accurately and hence detect the change immediately.

\begin{figure}[!htbp]
\begin{tabular}{cc}
     \includegraphics[width = .45\linewidth]{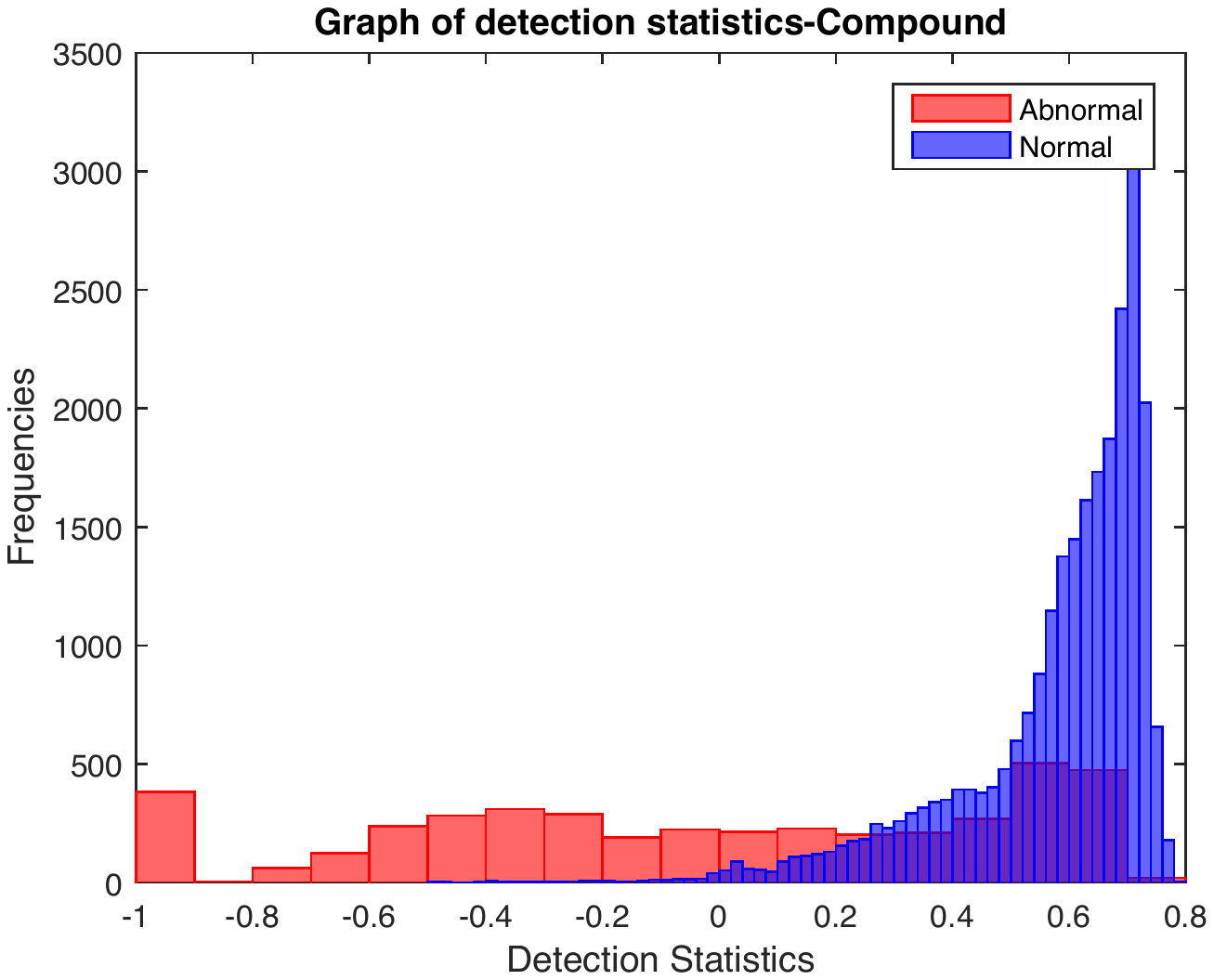} &
         \includegraphics[width = .44\linewidth]{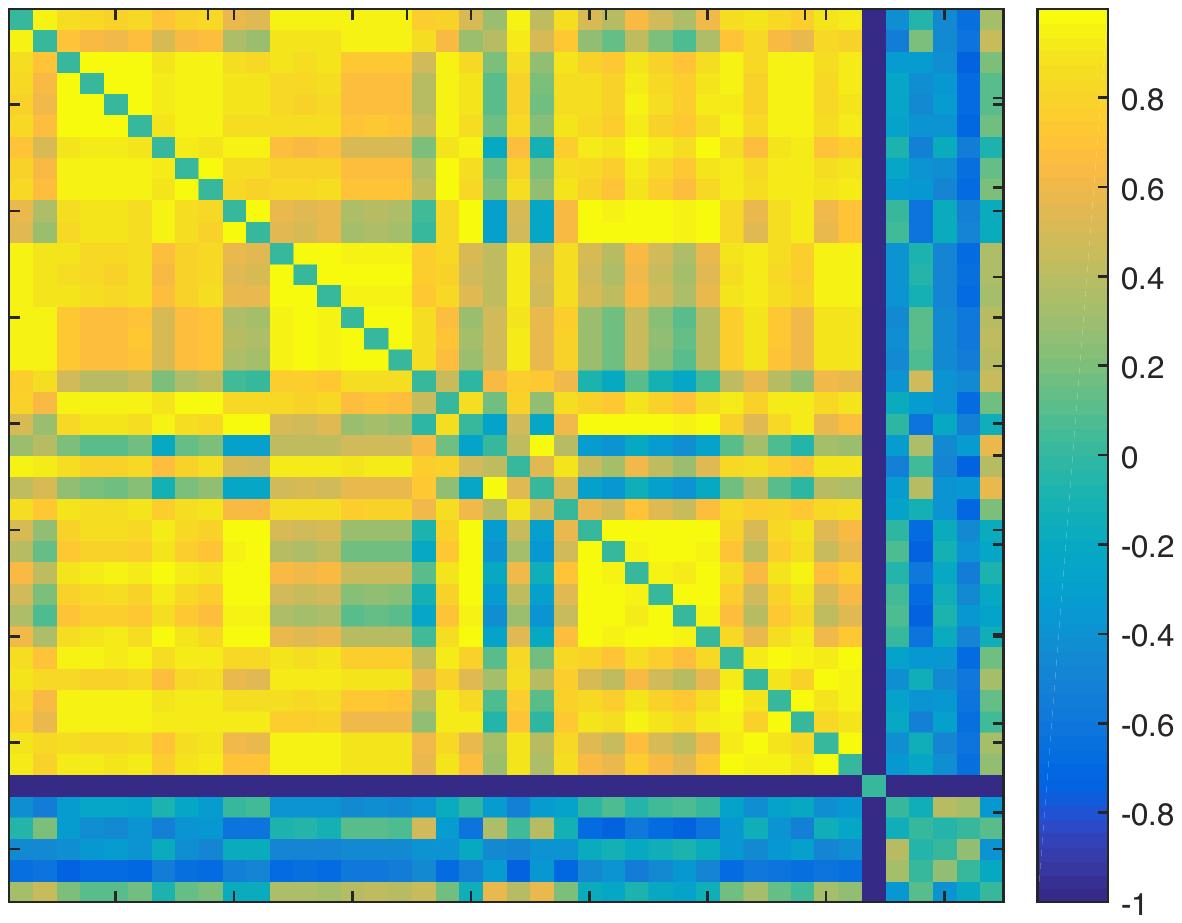} \\
     (a) & (b)
     \end{tabular}
     \caption{(a)  $y_{ijt}$ for a set of normal and abnormal sensors; it is clear that the statistics for the abnormal sensors are significantly larger than that of the normal sensors; (b) Heat map of the correlation at the time when a change is declared.}
     \label{figure:heatmap}
\end{figure}

\section{Fault isolation}\label{sec:isolation}

After we have detected a change,  we can estimate set of abnormal sensors as sensors with corresponding $\rho_{it}$ exceeding $b$, i.e., detect the node $i^*$ to be anomalous when
\[
i^*: \sum_{j\neq i^*} y_{i^*jt} > b'
\]
for a pre-determined threshold $b'$. Although the node-wise average similarity works well for change-point detection, this naive strategy may not work well for fault isolation. For instance, Fig. \ref{fig:form}(b) is a counterexample where the naive method fails to isolate the faulty sensor. The reason is that only local information is used and we make a decision for each sensor individually.

Instead, we consider community detection for fault isolation, which considers the membership assignment of nodes jointly. Since the similarity between the normal nodes tends to be larger than those between the abnormal and the normal nodes, we may estimate the faulty sensors by solving the following optimization problem:
\begin{equation}\label{weight}
   \max_{x_{i} \in \{1, -1\}} x^\intercal Y_T x,
\end{equation}
where $Y_T \in \mathbb{R}^{N\times N}$ is the observed adjacency matrix at the time of detection $T$. The solution has the meaning of membership, where $x_{i} = 1$ indicate $i \in \mathcal{S}$, and  otherwise $i \notin \mathcal{S}$. This corresponds to the general community detection problem (see, e.g., \cite{Abbe16,Vershynin16}), which can be solved via convex relaxation of $x_{i} \in \{1, -1\}$ to $x_{i} \in \mathcal{R}$, and various performance guarantees and efficient algorithms exist.
%(correlation weight matrix in this case)

Fig. \ref{figure:heatmap}(b) shows the heat map of the correlation matrix for the transformer data at $t = 25$, arranged using the recovered membership vector. Clearly, there are two obvious communities and this approach can recover the group of anomalous sensors.

\section{Conclusion and Discussion}\label{sec:con}
We have considered detecting change-point in a sequence of similarity networks, such that the change alters the similarity between sensors. We show that a simple node-wise average similarity procedure can be used to detect the change, and also show that the anomalous sensors can be localized using a community detection.

There are several  modifications of our algorithm to achieve better performance in specific cases. For instance, if we can use to estimate the distributions $\mathbb{P}_0$ and $\mathbb{P}_1$, then we may construct likelihood ratios and apply the classical CUSUM procedure \cite{basseville1993detection} for change detection. For example, the similarity $y_{ijt}$ for the generator temperature data in Fig (\ref{figure:heatmap})(a) can be fit using two Beta distributions.

\section*{Acknowledgement}

The authors work are partially support by NSF grants CCF-1442635 and CMMI-1538746. The authors would like to thank helpful discussions with Dr. Yuxin Chen at Stanford University.

\bibliography{pairwise}

\begin{thebibliography}{10}

\bibitem{tartakovsky2008asymptotically}
A.~G. Tartakovsky and V.~V. Veeravalli, ``Asymptotically optimal quickest
  change detection in distributed sensor systems,'' {\em Sequential Analysis},
  vol.~27, no.~4, pp.~441--475, 2008.

\bibitem{mei2010efficient}
Y.~Mei, ``Efficient scalable schemes for monitoring a large number of data
  streams,'' {\em Biometrika}, vol.~97, no.~2, pp.~419--433, 2010.

\bibitem{xie2013sequential}
Y.~Xie and D.~Siegmund, ``Sequential multi-sensor change-point detection,'' in
  {\em Information Theory and Applications Workshop (ITA), 2013}, pp.~1--20,
  IEEE, 2013.

\bibitem{chen2015spectral}
Y.~Chen and C.~Suh, ``Spectral mle: Top-$ k $ rank aggregation from pairwise
  comparisons,'' {\em arXiv preprint arXiv:1504.07218}, 2015.

\bibitem{casella2002statistical}
G.~Casella and R.~L. Berger, {\em Statistical inference}, vol.~2.
\newblock Duxbury Pacific Grove, CA, 2002.

\bibitem{lorden71}
G.~Lorden, ``Procedures for reacting to a change in distribution,'' The Annals
  of Mathematical Statistics, 1971.

\bibitem{mixon2011frame}
D.~G. Mixon, W.~U. Bajwa, and R.~Calderbank, ``Frame coherence and sparse
  signal processing,'' in {\em Information Theory Proceedings (ISIT), 2011 IEEE
  International Symposium on}, pp.~663--667, IEEE, 2011.

\bibitem{Abbe16}
E.~Abbe, ``Community detection and the stochastic block model: recent
  developments.'' Manuscript, 2016.

\bibitem{Vershynin16}
O.~Guedon and R.~Vershynin, ``Community detection in sparse networks via
  grothendieck's inequality,'' {\em Probability Theory and Related Fields},
  no.~165, pp.~1025--1049, 2016.

\bibitem{basseville1993detection}
M.~Basseville, I.~V. Nikiforov, {\em et~al.}, {\em Detection of abrupt changes:
  theory and application}, vol.~104.
\newblock Prentice Hall Englewood Cliffs, 1993.

\end{thebibliography}

%\clearpage
%\Appendices
\appendix

\begin{proof}[Proof of Proposition \ref{zero_thresh}]
Under the null, $u_{i, t}^\intercal u_{j, t} > 0$
\begin{equation*}
\begin{split}
&\mathbb{P}^{\infty}\{\max_{i=1}^N  \frac{\sum_{j\in \mathcal{N}(i)} (-y_{ijt})}{|\mathcal{N}(i)|} > 0\}\\
%=&~\mathbb{P}^{\infty}\{\bigcup_{i=1}^N  \{\frac{\sum_{j\in \mathcal{N}(i)} (-y_{ijt})}{|\mathcal{N}(i)|} > 0\}\}\\
\leq &~N\max_{i = 1}^N\mathbb{P}^{\infty}\{ \{\frac{\sum_{j\in \mathcal{N}(i)} (-y_{ijt})}{|\mathcal{N}(i)|} > 0\}\}\\
\leq &~N\max_{i = 1}^N(1 - \Phi(\frac{\sum_{j\in \mathcal{N}(i)}{\mu}_{it}^T{\mu}_{jt}}{\sqrt{|\mathcal{N}(i)|\sigma^2}}))\\
\leq &~N(1 - \Phi(\sqrt{\textsf{SNR}_{\rm max}})
\lesssim N e^{-\textsf{SNR}_{\rm max}}.
\end{split}
\end{equation*}
Under the alternative, $u_{i, t}^\intercal u_{j, t} < 0$ if $i \in \mathcal{S}, j \notin \mathcal{S}$ or $j \in \mathcal{S}, i \notin \mathcal{S}$.
\begin{equation*}
\begin{split}
&\mathbb{P}^{\infty}\{\max_{i=1}^N  \frac{\sum_{j\in \mathcal{N}(i)} (-y_{ijt})}{|\mathcal{N}(i)|} > 0\}\\
%=&~\mathbb{P}^{\infty}\{\bigcup_{i=1}^N  \{\frac{\sum_{j\in \mathcal{N}(i)} (-y_{ijt})}{|\mathcal{N}(i)|} > 0\}\}\\
\geq &~\max_{i = 1}^N\mathbb{P}^{\infty}\{ \{\frac{\sum_{j\in \mathcal{N}(i)} (-y_{ijt})}{|\mathcal{N}(i)|} > 0\}\}\\
\geq &~\max_{i = 1}^N(1 - \Phi(\frac{\sum_{j\in \mathcal{N}(i)}{\mu}_{it}^T{\mu}_{jt}}{\sqrt{|\mathcal{N}(i)|\sigma^2}}))\\
\geq &~(1 - \Phi(-\sqrt{\textsf{SNR}_{\rm min}})
\gtrsim 1-e^{-\textsf{SNR}_{\rm min}}.
\end{split}
\end{equation*}
\end{proof}

\end{document}